\pdfoutput=1
\documentclass[11pt]{article}
\usepackage[T1]{fontenc}
\usepackage[margin=1in]{geometry}
\usepackage{amsmath,amssymb,amsthm,mathtools}
\usepackage{booktabs,graphicx,microtype,xcolor}
\usepackage{etoolbox,tikz}
\usepackage{hyperref}
\definecolor{navy}{HTML}{17324D}
\definecolor{teal}{HTML}{087F8C}
\hypersetup{
  colorlinks=true,linkcolor=navy,citecolor=navy,urlcolor=teal,
  pdftitle={Brownian covariances on an interval: a complete endpoint classification},
  pdfauthor={Lars Lau Raket}
}
\graphicspath{{figures/}}
\newtheorem{theorem}{Theorem}[section]
\newtheorem{proposition}[theorem]{Proposition}
\theoremstyle{definition}
\newtheorem{definition}[theorem]{Definition}
\newtheorem{example}[theorem]{Example}
\newcommand{\exampleendmark}{\,\tikz[baseline=-0.4ex]{\draw[line width=0.6pt] (0,0) circle (1.6pt);}}
\AtEndEnvironment{example}{\hfill\exampleendmark}
\DeclareMathOperator{\Var}{Var}
\DeclareMathOperator{\tr}{tr}
\DeclareMathOperator{\rank}{rank}
\DeclareMathOperator{\ran}{range}
\DeclareMathOperator{\dom}{dom}
\DeclareMathOperator{\diag}{diag}
\newcommand{\R}{\mathbb R}
\newcommand{\dd}{\,\mathrm d}
\newcommand{\T}{\mathsf T}
\newcommand{\bker}{k_{\mathrm D}}
\newcommand{\psd}{\mathbb S_+^2}
\newcommand{\pd}{\mathbb S_{++}^2}

\title{Brownian covariances on an interval:\\a complete endpoint classification}
\author{Lars Lau Raket}
\date{}

\begin{document}
\maketitle

\begin{abstract}
Brownian motion and the Brownian bridge have covariance kernels that are
Green's functions of the negative second derivative on the unit interval,
with mixed Dirichlet--Neumann and Dirichlet boundary conditions,
respectively. We characterize the Green's functions of all strictly positive
real self-adjoint realizations of this operator. The kernels are in
one-to-one correspondence with positive-semidefinite $2\times2$ endpoint
covariance matrices. We call the associated centered Gaussian processes
\emph{Brownian processes}. Each is a standard Brownian bridge plus the
affine interpolation of an independent Gaussian endpoint vector.
Singular endpoint covariance matrices give hard endpoint constraints, such
as the pinned start $X(0)=0$ of Brownian motion. This parametrization gives explicit operator domains and
energy spaces. On every mesh it gives an exact sparse precision matrix,
so the likelihood of $n$ observations can be evaluated in $O(n)$
operations. Nonnegative realizations with a null space are
treated separately through orthogonally normalized pseudoinverses, which
are proper Gaussian covariances.
\end{abstract}

\noindent\textbf{Keywords:} Gaussian process; Green's function; boundary condition;
Brownian bridge; self-adjoint extension; covariance geometry.

\section{Introduction}\label{sec:introduction}

Brownian motion's covariance function is the Green's function of the negative
second derivative on $[0,1]$ with operator-domain conditions $f(0)=0$ and
$f'(1)=0$. The Brownian bridge's covariance corresponds
to the Green's function under $f(0)=f(1)=0$. This raises two questions: which other endpoint conditions give Green's functions that are covariance functions, and how are the resulting processes related to Brownian motion and the Brownian bridge?

We answer both questions through the covariance of the two endpoint
values, instead of listing boundary conditions one by one. All processes are
Gaussian and centered unless stated otherwise. It turns out that after subtracting the
affine interpolation of its endpoints, each process classified in Theorem~\ref{thm:classification}
is a standard Brownian bridge independent of those endpoints. Parametrizing
the endpoint covariance also includes hard endpoint constraints as finite
parameter values.

Self-adjoint boundary realizations are classical \cite{zettl2005}; their
connections with Gaussian processes and spline penalties are equally well
established \cite{kimeldorf1970,rasmussen2006,raket2021}. The companion
bestiary \cite{raketbestiary} develops examples and distinguishes changing
boundary realizations, recentering and conditioning. Markussen
\cite{markussen2013} used this correspondence for likelihood inference in
functional mixed-effects models, with Brownian motion and the Brownian
bridge among his examples. Markussen's construction uses prescribed boundary
conditions. Here every admissible choice
corresponds to an endpoint covariance matrix, so the boundary conditions
can be treated as unknown parameters. We classify the
full positive-semidefinite endpoint-covariance cone and treat noninvertible
realizations separately.
Section~\ref{sec:classification} proves the classification and describes
hard endpoint constraints. Section~\ref{sec:geometry} studies the geometry
of the parameter cone and identifies the Markov and stationary members.
Section~\ref{sec:mesh} shows that the sampled precision matrix is exact and
sparse on any mesh, and Section~\ref{sec:singular} treats nonnegative
realizations with a null space. Section~\ref{sec:conclusion} discusses
estimation of the endpoint covariance and spline smoothing.

Throughout, functions and processes are real-valued, the interval is
$[0,1]$, $W$ denotes standard Brownian motion, and $s\wedge t=\min(s,t)$,
$s\vee t=\max(s,t)$. Write
\[
 \bker(s,t)=s\wedge t-st,\qquad
 z(s)=\begin{pmatrix}1-s\\s\end{pmatrix},\qquad
 J=\begin{pmatrix}1&-1\\-1&1\end{pmatrix}.
\]
Here $\bker$ is the covariance of a standard Brownian bridge, and the
Green's function of $-f''$ with Dirichlet conditions $f(0)=f(1)=0$. For a function
$f$, let
\[
 \gamma f=\begin{pmatrix}f(0)\\f(1)\end{pmatrix},\qquad
 d_f=\begin{pmatrix}f'(0)\\-f'(1)\end{pmatrix},\qquad
 (\mathcal E y)(s)=z(s)^\T y.
\]
Here $d_f$ is the vector of inward endpoint derivatives. Endpoint values
are defined on $H^1(0,1)$; derivative traces require $H^2$.
We use $C^\dagger$ for the Moore--Penrose inverse of a symmetric matrix and
$\psd$, $\pd$ for the positive-semidefinite and positive-definite
$2\times2$ matrices.

\begin{definition}\label{def:class}
An ordinary Brownian covariance is the Green's function of a strictly positive
real self-adjoint extension of the minimal operator $-f''$ on
\[
 H^2_0(0,1)=\{f\in H^2:f(0)=f(1)=f'(0)=f'(1)=0\},
\]
that is, of $-f''$ on $(0,1)$ with self-adjoint conditions imposed only at
the endpoints.
A centered Gaussian process with such a covariance is called a
\emph{Brownian process}. Strict positivity means a positive lower spectral
bound. Nonnegative realizations with a null space are treated in
Section~\ref{sec:singular}.
\end{definition}

\section{Complete classification by endpoint covariance}\label{sec:classification}

\begin{theorem}[Endpoint classification]\label{thm:classification}
The map $C\mapsto K_C$, where
\begin{equation}\label{eq:kernel}
 K_C(s,t)=\bker(s,t)+z(s)^\T C z(t),\qquad C\in\psd,
\end{equation}
is a bijection onto the ordinary Brownian covariances. Its inverse reads
off the endpoint values:
\begin{equation}\label{eq:endpoints}
 C_{ij}=K_C(i,j),\qquad i,j\in\{0,1\},
\end{equation}
so $C$ is the covariance matrix of $(X_C(0),X_C(1))$, defined below.
For $C\in\psd$, put $V=\ran C$. Then:
\begin{enumerate}
\item \emph{Process.} The associated Brownian process has the representation
\begin{equation}\label{eq:construction}
 X_C(s)=B(s)+z(s)^\T Y,\qquad
 Y\sim N(0,C),\quad B\ \text{a standard bridge independent of }Y.
\end{equation}
\item \emph{Energy.} $K_C$ is the reproducing kernel of
$\mathcal H_C=\{f\in H^1(0,1):\gamma f\in V\}$ with the quadratic form
\begin{equation}\label{eq:energy}
 q_C[f]=\int_0^1 |(f-\mathcal E\gamma f)'|^2\dd s+(\gamma f)^\T C^\dagger\gamma f
 =\int_0^1|f'|^2\dd s+(\gamma f)^\T(C^\dagger-J)\gamma f.
\end{equation}
\item \emph{Operator.} $K_C$ is the integral kernel of $\mathcal L_C^{-1}$, where
$\mathcal L_Cf=-f''$ on
\begin{equation}\label{eq:domain}
 \dom(\mathcal L_C)=\{f\in H^2(0,1):\gamma f=C(d_f+J\gamma f)\}.
\end{equation}
\end{enumerate}
\end{theorem}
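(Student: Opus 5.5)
We argue in four steps, working directly with the kernel \eqref{eq:kernel} and then checking that the construction exhausts all strictly positive real self-adjoint extensions.

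\emph{Step 1 (process and endpoint values).} Take $B$ and $Y$ independent as in \eqref{eq:construction}. Then $\operatorname{Cov}(X_C(s),X_C(t))=\bker(s,t)+z(s)^\T Cz(t)$, because the bridge covariance and the affine part add. Since $\bker$ vanishes when $s$ or $t$ lies in $\{0,1\}$, and $z(0)=e_1$, $z(1)=e_2$, we obtain \eqref{eq:endpoints}. This already shows that $C\mapsto K_C$ is injective, and that each $K_C$ is a covariance.

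\emph{Step 2 (Green's function and domain).} Fix $C$ and $g\in L^2$, and set $u(s)=\int_0^1 K_C(s,t)g(t)\dd t$. Write $u=u_0+z^\T Cm$, where $u_0=\int\bker(\cdot,t)g(t)\dd t$ and $m=\int z g$. Then $-u''=-u_0''=g$, and $\gamma u=\gamma u_0+Cm=Cm$. Since $z$ is affine, $d_u=d_{u_0}+Cd_{z}$ componentwise, where $d_{u_0}=m$ by the classical bridge computation: $u_0'(0)=\int(1-t)g$ and $-u_0'(1)=\int tg$. Also $d_z$ applied to $Cm$ equals $-J Cm$, since the affine function with values $y$ has inward derivatives $(y_2-y_1,\,y_1-y_2)=-Jy$. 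Hence $d_u+J\gamma u=m$ and $\gamma u=C(d_u+J\gamma u)$, so $u\in\dom(\mathcal L_C)$.

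Conversely, let $f\in\dom(\mathcal L_C)$ with $-f''=g$. Then $f-u$ is affine, $\gamma(f-u)=C(d+J\gamma)(f-u)$, and $(d+J\gamma)$ annihilates affine functions. Therefore $\gamma(f-u)=0$, so $f=u$. Thus $\mathcal L_C$ is bijective with inverse having kernel $K_C$. Symmetry and positive-definiteness of $K_C$ give self-adjointness and nonnegativity of $\mathcal L_C$. Strict positivity follows because $K_C$ is bounded, so $\mathcal L_C^{-1}$ is bounded, and injectivity holds.

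\emph{Step 3 (energy).} On $H^1$ with $\gamma f\in V$, write $f=(f-\mathcal E\gamma f)+\mathcal E\gamma f$. The first summand lies in $H^1_0$, whose RKHS with norm $\int|h'|^2$ has kernel $\bker$. The second ranges over $\mathcal E V$ with norm $y^\T C^\dagger y$, whose kernel is $z^\T Cz$. Since the map $f\mapsto(f-\mathcal E\gamma f,\gamma f)$ is a bijection onto $H^1_0\times V$, the standard sum-of-kernels theorem gives the first form in \eqref{eq:energy}; the decomposition is direct, so we get the norm exactly rather than an infimum. For the second form, expand the square: the cross term $\int h'(\mathcal E y)'$ vanishes for $h\in H^1_0$, and $\int|(\mathcal Ey)'|^2=(y_2-y_1)^2=y^\T Jy$.

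\emph{Step 4 (surjectivity, the main obstacle).} Let $\mathcal L$ be a strictly positive real self-adjoint extension of the minimal operator, and let $K$ be its Green's function. By extension theory \cite{zettl2005}, $\dom\mathcal L\subset H^2$ is given by two real boundary conditions on $(\gamma f,d_f)$ that form a Lagrangian subspace for the form $\langle\gamma f,d_g\rangle-\langle d_f,\gamma g\rangle$. Define $C_{ij}=K(i,j)$, which is a PSD matrix because $K$ is a PSD kernel; it is the Gram matrix of evaluations in the form domain.

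We then show $\mathcal L=\mathcal L_C$. The cleanest route: $\tilde K=K-K_C$ satisfies $-\partial_s^2\tilde K=0$ in the distributional sense, since both are Green's functions of $-f''$. Hence $\tilde K(s,t)=z(s)^\T Mz(t)$ for a symmetric $M$. Evaluating at the endpoints gives $M=\tilde K|_{\{0,1\}^2}=C-C=0$. Justifying the first claim requires care. Both $K(\cdot,t)$ and $K_C(\cdot,t)$ satisfy $-\partial_s^2=\delta_t$, so their difference is affine in $s$ for each $t$; by symmetry it is then affine in $t$ as well, i.e.\ bilinear in $z$. The delicate part is establishing that $K(\cdot,t)$ has this jump structure, i.e.\ that $K$ is the integral kernel for a realization whose domain consists only of $H^2$ functions with endpoint conditions. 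That is exactly the Lagrangian description, so we invoke it there. Since Lagrangian planes correspond one-to-one with self-adjoint realizations and the Green's function determines the operator, this proves surjectivity.
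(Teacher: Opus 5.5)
Your proof is correct, but it runs kernel-first where the paper runs form-first.

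\textbf{Item 3.} You verify directly that the integral operator with kernel $K_C$ maps $L^2$ into $\dom(\mathcal L_C)$, via $\gamma u=C\,\mathcal E^*g$ and $d_u+J\gamma u=\mathcal E^*g$. You also check that the domain condition annihilates affine functions. Self-adjointness and strict positivity then follow by inverting a bounded, symmetric, positive, injective operator.

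The paper instead shows that $q_C$ is closed and coercive. It identifies the associated operator by integration by parts, obtaining $y\in V$ and $P_V(d_f+Jy)=C^\dagger y$, and multiplies by $C$. It then gets items~2 and~3 together from the single polarized identity $q_C(K_C(\cdot,t),f)=f(t)$. You get item~2 separately, from Aronszajn's sum-of-kernels theorem and $H^1_0\cap\mathcal E V=\{0\}$.

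Your route is more elementary and treats singular $C$ without any $P_V$ or $C^\dagger$ bookkeeping, since the domain condition literally reads ``endpoint values equal $C$ times the moment vector $\mathcal E^*g$''. The paper's route makes the energy--operator link intrinsic, and it is the one that carries over to the nonnegative realizations of Proposition~\ref{prop:nonnegative}.

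\textbf{Completeness.} Both arguments rest on the fact that two Green's functions of $-f''$ differ by a kernel $z(s)^\T Mz(t)$. The paper works at operator level: $\mathcal L^{-1}-\mathcal L_{\mathrm D}^{-1}$ is self-adjoint with affine range, hence equals $\mathcal E C\mathcal E^*$. It then gets $C\succeq0$ from the test functions $g_\varepsilon$. This needs only that $\mathcal L$ acts as $-f''$, which already follows because $\mathcal L=\mathcal L^*$ is contained in the adjoint of the minimal operator. The full Lagrangian classification you invoke is therefore more than is required.

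\textbf{An ordering point in Step~4.} You set $C_{ij}=K(i,j)$ and call it positive semidefinite before knowing that $K$ is continuous up to the corners, or that the form domain has bounded point evaluations. Run the comparison against $\bker$ first, i.e.\ with $C=0$. This gives $K=\bker+z^\T Mz$ with $M$ symmetric, so $K$ is continuous and $M$ can be read off from the corners. Then $M\succeq0$ follows from positivity of the operator, either pointwise for a continuous kernel or via the paper's $g_\varepsilon$. With that reordering the argument is complete.
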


\begin{proof}
\emph{The form.} Fix $C\in\psd$. Every $f\in\mathcal H_C$ decomposes uniquely
as $f=h+\mathcal E y$ with $h\in H^1_0(0,1)$ and $y=\gamma f\in V$. The cross
term between $h'$ and $(\mathcal E y)'$ vanishes and
$\int|(\mathcal E y)'|^2=y^\T Jy$, which gives both expressions in
\eqref{eq:energy}. Since the derivative norm is coercive on $H^1_0$ and
$C^\dagger$ is positive definite on $V$, $q_C$ is closed and coercive, with
norm equivalent to the $H^1$ norm on $\mathcal H_C$; for $V=\{0\}$ it
reduces to $\int_0^1|f'|^2\dd s$ on $H^1_0$.

\emph{The operator.} Integration by parts shows that the operator of $q_C$
acts as $-f''$ with boundary conditions $y\in V$ and
$P_V(d_f+Jy)=C^\dagger y$, where $P_V$ is the orthogonal projection onto $V$.
Multiplying by $C$ gives \eqref{eq:domain}. Conversely,
\eqref{eq:domain} implies $y\in V$; applying $C^\dagger$ then recovers the
projected condition. Hence $\mathcal L_C$ is a strictly
positive self-adjoint realization.

\emph{The kernel and the process.} For fixed $t$, $K_C(\cdot,t)$ has endpoint
vector $Cz(t)$ and Dirichlet component $\bker(\cdot,t)$. For
$f=h+\mathcal E y\in\mathcal H_C$, polarizing \eqref{eq:energy} gives
\[
 q_C\bigl(K_C(\cdot,t),f\bigr)=h(t)+(Cz(t))^\T C^\dagger y=h(t)+z(t)^\T y=f(t).
\]
So $K_C$ is the reproducing kernel of $\mathcal H_C$ and the integral kernel
of $\mathcal L_C^{-1}$. The two independent terms in \eqref{eq:construction}
have covariances $\bker(s,t)$ and $z(s)^\T Cz(t)$, so $X_C$ has covariance $K_C$.

\emph{Completeness and uniqueness.} Let $\mathcal L$ be a strictly positive
real self-adjoint realization and $\mathcal L_{\mathrm D}$ the Dirichlet
realization. For $g\in L^2$, $(\mathcal L^{-1}g-\mathcal L_{\mathrm D}^{-1}g)''=0$,
so the self-adjoint difference has affine range and equals
$\mathcal E C\mathcal E^*$ for a unique symmetric $C$, where
$\mathcal E^*g=\int_0^1z(s)g(s)\dd s$. Thus $\mathcal L^{-1}$ has kernel
\eqref{eq:kernel}. Applying positivity of $\mathcal L^{-1}$ to
$g_\varepsilon=\varepsilon^{-1}\{a_0\mathbf1_{[0,\varepsilon]}+a_1\mathbf1_{[1-\varepsilon,1]}\}$
and letting $\varepsilon\downarrow0$ gives $a^\T Ca\ge0$: the Dirichlet
contribution tends to zero, while $\mathcal E^*g_\varepsilon\to a$. Finally,
$\mathcal L^{-1}$ determines $\mathcal L$, and \eqref{eq:endpoints} determines $C$.
\end{proof}

For $0<t<1$, the column $K_C(\cdot,t)$ is affine on each side of $t$, and
its slope drops by one at $t$. It therefore lies in the energy space
$\mathcal H_C$ but not in $\dom(\mathcal L_C)$, as is usual for Green's
functions. For $t\in\{0,1\}$, the column is affine but does not satisfy the
boundary conditions in \eqref{eq:domain} unless it vanishes. In both cases
it is characterized by the reproducing identity
$q_C(K_C(\cdot,t),f)=f(t)$ from the proof, which holds for every
$t\in[0,1]$.

\subsection{The finite boundary-penalty chart}

When $C\succ0$, there are no hard endpoint constraints: $(X_C(0),X_C(1))$
has a positive density on $\R^2$, and the boundary conditions act only
through the endpoint covariance. Set
\begin{equation}\label{eq:penalty}
 P=C^{-1}-J=\begin{pmatrix}a&c\\c&b\end{pmatrix}.
\end{equation}
Then \eqref{eq:domain} becomes
\[
 f'(0)=af(0)+cf(1),\qquad -f'(1)=cf(0)+bf(1).
\]
The entry $c$ is the only term coupling the two endpoints. When $c=0$
the conditions are \emph{separated}, each involving a single endpoint,
and reduce to the Robin conditions $f'(0)=af(0)$ and $-f'(1)=bf(1)$.
The exact admissibility condition is
\[
 P+J\succ0
 \quad\Longleftrightarrow\quad
 a>-1,\qquad \Delta>0,
\]
where $\Delta=(1+a)(1+b)-(1-c)^2=\det(P+J)$.
As $\Delta\downarrow0$, the endpoint covariance becomes unbounded.
The minimum of $\int|f'|^2$ at endpoint vector $y$ is $y^\T Jy$,
attained by $\mathcal E y$. The reduced energy is therefore
$y^\T(P+J)y$. A boundary penalty may be indefinite while
the full energy is positive. For example, $C=I$ gives
$P=\begin{psmallmatrix}0&1\\1&0\end{psmallmatrix}$, while $P+J=I$.

Matrix inversion in \eqref{eq:penalty} gives
\[
 C=\frac1\Delta
 \begin{pmatrix}1+b&1-c\\1-c&1+a\end{pmatrix}.
\]
Substitution into \eqref{eq:kernel} recovers the formula
\[
 K_C(s,t)=
 \frac{(1-c)+(a+c)(s\wedge t)+(b+c)(1-s\vee t)
       +(ab-c^2)(s\wedge t)(1-s\vee t)}{\Delta}.
\]
Note that the general matrix form \eqref{eq:kernel} remains valid even when
$C$ is singular.

Here $P$ is a boundary contribution to the energy. The joint endpoint
precision is $C^{-1}=P+J$, and the marginal scalar precision of $X_C(0)$
is $1/C_{00}=\Delta/(1+b)$. 

\subsection{Hard endpoint conditions are finite parameters}

The number of unconstrained endpoint coordinates is $\rank C$.
For $C=0$, \eqref{eq:domain} is Dirichlet at both ends. For
$C=\sigma^2 ww^\T$, $\sigma>0$, $w\ne0$, the endpoint vector lies on
$\operatorname{span}\{w\}$ and
\[
 K_C(s,t)=\bker(s,t)+\sigma^2
              \bigl(z(s)^\T w\bigr)\bigl(z(t)^\T w\bigr).
\]
Such singular $C$ imposes hard endpoint constraints while $\mathcal L_C$ remains
strictly positive; it does not give the operator a null space.
Every rank-one $C$ with $w=(\alpha,1)^\T$ arises from the coupled conditions
\begin{equation}\label{eq:coupled}
 f(0)=\alpha f(1),\qquad
 \alpha f'(0)-f'(1)=\kappa f(1)
\end{equation}
with $\sigma^2=1/\{\kappa+(\alpha-1)^2\}$. These conditions have an ordinary
covariance exactly when $\kappa+(\alpha-1)^2>0$, and then
\[
 C=\frac{1}{\kappa+(\alpha-1)^2}
   \begin{pmatrix}\alpha^2&\alpha\\\alpha&1\end{pmatrix},
 \qquad
 K_C(s,t)=\bker(s,t)+
       \frac{\{\alpha(1-s)+s\}\{\alpha(1-t)+t\}}{\kappa+(\alpha-1)^2}.
\]
The remaining direction $w=(1,0)^\T$, with $f(1)=0$, follows by
interchanging the endpoints. The quantity $\kappa+(\alpha-1)^2$ is the
energy of the affine function $g(s)=\alpha(1-s)+s$, which satisfies
the endpoint-value constraint $f(0)=\alpha f(1)$ in \eqref{eq:coupled}.
For example, $\alpha=-1$, $\kappa=0$ is the anti-periodic realization.
Table~\ref{tab:examples} lists selected members of the family.

\begin{table}[tbp]
\centering
\caption{Selected members, with their endpoint covariance matrix $C$.}
\label{tab:examples}
\small
\renewcommand{\arraystretch}{1.2}
\newcommand{\Cstrut}{\vphantom{1+\sigma^2/4}}
\begin{tabular}{lll}
\toprule
Process & Endpoint covariance $C$ & Kernel $K_C(s,t)$ \\
\midrule
Brownian bridge & $\begin{psmallmatrix}\Cstrut 0&0\\\Cstrut 0&0\end{psmallmatrix}$ & $s\wedge t-st$ \\[6pt]
Brownian motion & $\begin{psmallmatrix}\Cstrut 0&0\\\Cstrut 0&1\end{psmallmatrix}$ & $s\wedge t$ \\[6pt]
Time-reversed Brownian motion & $\begin{psmallmatrix}\Cstrut 1&0\\\Cstrut 0&0\end{psmallmatrix}$ & $1-s\vee t$ \\[6pt]
Brownian motion plus random slope & $\begin{psmallmatrix}\Cstrut 0&0\\\Cstrut 0&1+\sigma^2\end{psmallmatrix}$
 & $s\wedge t+\sigma^2st$ \\[6pt]
Anti-periodic Brownian motion & $\begin{psmallmatrix}\Cstrut 1/4&-1/4\\\Cstrut -1/4&1/4\end{psmallmatrix}$
 & $\tfrac14-\tfrac12|s-t|$ \\[6pt]
Slepian Brownian motion & $\begin{psmallmatrix}\Cstrut 1/2&0\\\Cstrut 0&1/2\end{psmallmatrix}$
 & $\tfrac12(1-|s-t|)$ \\[6pt]
Robin--Neumann Brownian motion & $\begin{psmallmatrix}\Cstrut \theta&\theta\\\Cstrut \theta&1+\theta\end{psmallmatrix}$, $\theta>0$
 & $s\wedge t+\theta$ \\[6pt]
\bottomrule
\end{tabular}
\end{table}

\section{Geometry and structural properties}\label{sec:geometry}

\subsection{The covariance cone and its strata}

The linear coordinates
\begin{equation}\label{eq:cone}
 C=\begin{pmatrix}r+x&y\\y&r-x\end{pmatrix}
 \qquad
 C\succeq0\quad\Longleftrightarrow\quad
 r\ge\sqrt{x^2+y^2}
\end{equation}
identify the parameter set with the closed three-dimensional Lorentz
cone. Its eigenvalues are $r\pm\sqrt{x^2+y^2}$. The interior is a smooth
three-dimensional manifold; the nonzero boundary is the two-dimensional
rank-one stratum, and the apex is the bridge. The full family is naturally
stratified; its embedded boundary is not smooth at the apex.

A scale--shape parametrization is $\tau=\tr C=2r$ and, for $\tau>0$,
\begin{equation}\label{eq:disk}
 C=\frac{\tau}{2}
   \begin{pmatrix}1+\xi&\eta\\\eta&1-\xi\end{pmatrix},
 \qquad \xi^2+\eta^2\le1.
\end{equation}
Thus total endpoint variance is a scale and shape lies in a disk.
Its boundary describes one free endpoint coordinate; antipodal endpoint
directions give the same rank-one covariance. Endpoint correlations are
$\eta/\sqrt{1-\xi^2}$ when both endpoint variances are positive.
Unlike correlation coordinates alone, the cone remains unambiguous at
pinned endpoints. Figure~\ref{fig:geometry} shows both coordinate systems.

\begin{figure}[tbp]
\centering
\includegraphics[width=\textwidth]{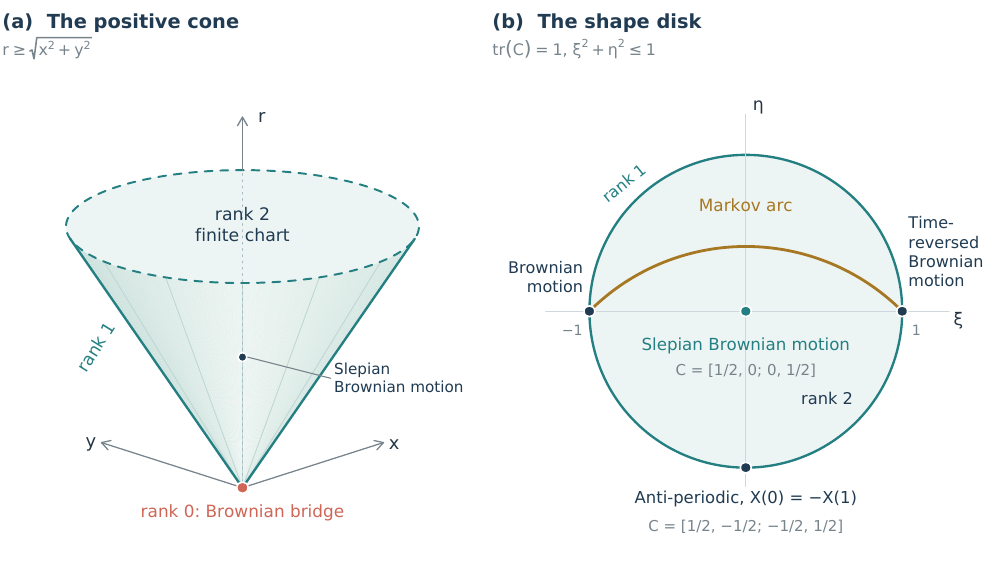}
\caption{Endpoint-covariance geometry. Left: a finite display of the
unbounded cone \eqref{eq:cone}; its side is rank one and its apex is the
bridge. Right: the shape coordinates $(\xi,\eta)$ of \eqref{eq:disk} at $\tr C=1$,
displaying endpoint variance asymmetry and covariance.
The Markov arc is the exact locus $\det C=C_{01}$. The anti-periodic
point is the trace-one member of the anti-periodic family $X(0)=-X(1)$;
the anti-periodic Brownian motion of Table~\ref{tab:examples} is its
trace-$1/2$ member.}
\label{fig:geometry}
\end{figure}

\begin{figure}[tbp]
\centering
\includegraphics[width=\textwidth]{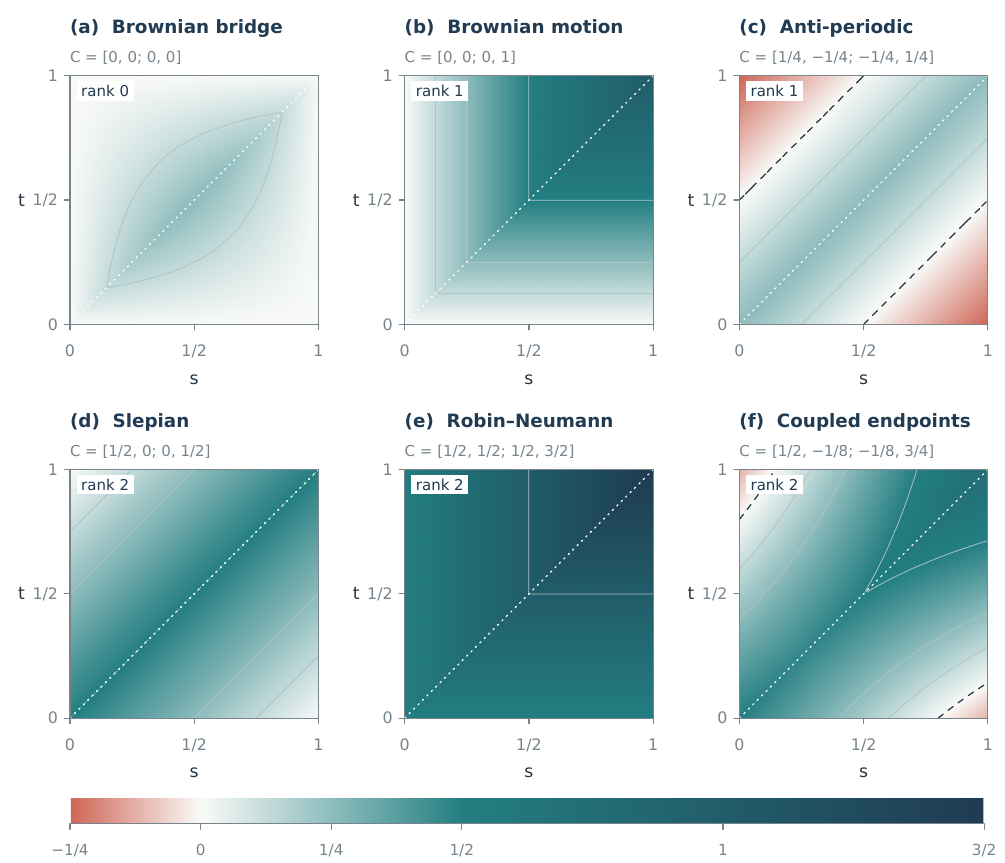}
\caption{Covariance kernels $K_C(s,t)$ of six representatives of
\eqref{eq:kernel}, spanning the rank-zero, rank-one and rank-two cases.
The endpoint covariance is specified in each panel as
$C=[C_{00},C_{01};C_{10},C_{11}]$, row by row. Pinned limits are
plotted from their exact matrices, not approximated by large boundary
penalties. Dashed contours mark zero covariance, and the dotted line is
the diagonal.}
\label{fig:kernels}
\end{figure}

\begin{proposition}[Closure and covariance order]\label{prop:order}
For $C,D\in\psd$,
\[
 \|K_C-K_D\|_\infty=\max_{i,j\in\{0,1\}}|C_{ij}-D_{ij}|,
 \qquad
 K_C\succeq K_D\ \Longleftrightarrow\ C\succeq D.
\]
Moreover, $\rank(K_C-K_D)=\rank(C-D)$ as integral operators.
The family \eqref{eq:kernel} is closed in uniform kernel norm, and
$\{K_C:C\succ0\}$ is dense in it. The bridge is its least element in
covariance order; there is no greatest element.
\end{proposition}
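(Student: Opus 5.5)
Everything will be reduced to the single identity $K_C-K_D=z(s)^\T(C-D)z(t)$, which follows from \eqref{eq:kernel} because the bridge terms cancel. Write $E=C-D$, a symmetric $2\times2$ matrix. The difference kernel is then a quadratic form in $z(s),z(t)$, and each claim becomes a statement about $E$.

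\emph{Sup norm.} The function $z(s)^\T E z(t)$ is bilinear in $(1-s,s)$ and $(1-t,t)$. It is therefore affine in $s$ for fixed $t$ and affine in $t$ for fixed $s$. An absolute value of an affine function is maximized at an endpoint, so the supremum over $[0,1]^2$ is attained at a corner. The corners give the values $E_{ij}$, since $z(0)=e_1$ and $z(1)=e_2$. Hence $\|K_C-K_D\|_\infty=\max|E_{ij}|$.

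\emph{Rank.} As an integral operator on $L^2$, $K_C-K_D=\mathcal E E\mathcal E^*$, with $\mathcal E$ and $\mathcal E^*$ as in the proof of Theorem~\ref{thm:classification}. The map $\mathcal E:\R^2\to L^2$ is injective, because $1-s$ and $s$ are linearly independent. Consequently $\mathcal E^*$ is surjective onto $\R^2$. The rank of $\mathcal E E\mathcal E^*$ therefore equals $\rank E$.

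\emph{Order.} We have $\langle g,(K_C-K_D)g\rangle=(\mathcal E^*g)^\T E(\mathcal E^*g)$. Since $\mathcal E^*$ is onto $\R^2$, this is nonnegative for all $g$ if and only if $E\succeq0$. An equivalent route, matching the theorem's proof, is to test on the approximate point masses $g_\varepsilon$. Either way, $K_C\succeq K_D$ holds exactly when $C\succeq D$. The same identity also settles positive semidefiniteness of kernels as functions, i.e.\ of the matrices $[K(s_i,s_j)]$: sampling at points including $0$ and $1$ recovers $E$.

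\emph{Closure and density.} Suppose $K_{C_n}\to K$ uniformly. By the sup-norm identity, $(C_n)$ is Cauchy in $\R^{2\times2}$, so $C_n\to C$. Since $\psd$ is closed, $C\in\psd$, and then $K=K_C$. For density, take any $C\in\psd$; then $C+\varepsilon I\succ0$ and $\|K_{C+\varepsilon I}-K_C\|_\infty=\varepsilon\to0$.

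\emph{Extremal elements.} The bridge corresponds to $C=0$, and $C\succeq0$ for every $C\in\psd$, so the bridge is the least element. There is no greatest element: if $G\succeq C$ for every $C$, then taking $C=G+I$ gives $G\succeq G+I$, which is impossible.

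The only step needing care is the sup-norm identity. The step that makes it work is the corner-maximization argument for functions affine in each variable separately. The rest is linear algebra transported through the injective map $\mathcal E$.
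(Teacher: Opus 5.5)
Your proof is correct and follows the paper's argument essentially step for step. You use the identity $K_C-K_D=z(s)^\T(C-D)z(t)$, corner values for the sup norm, injectivity of $\mathcal E$ with surjectivity of $\mathcal E^*$ for the rank, convergence of corner matrices for closure, $C+\varepsilon I$ for density, and unboundedness of $\psd$ for the absence of a greatest element. The differences are cosmetic. You bound the sup norm by separate affinity in $s$ and $t$, where the paper writes the difference as a convex combination of the entries of $C-D$. You also obtain the reverse order implication from surjectivity of $\mathcal E^*$, in addition to the endpoint evaluation that the paper uses.
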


\begin{proof}
The kernel difference is a convex combination of the four entries of
$C-D$, and all four occur at corners. Positivity of $C-D$ gives
positivity of $K_C-K_D$; the reverse implication follows by evaluating
at the endpoints. The map $y\mapsto\mathcal E y$ is injective and its
adjoint is onto, giving the rank identity. A uniform kernel limit
determines a limit of corner matrices by \eqref{eq:endpoints}; that
limit is positive semidefinite. Conversely $C+\varepsilon I\to C$
gives density. Finally, $C\succeq0$ gives $K_C\succeq K_0$, so the bridge is
least. A greatest element $K_G$ would require $G$ to dominate every
positive multiple of $I$, which no matrix does.
\end{proof}

\subsection{Markov and stationary members}

\begin{proposition}[Markov and stationary members]\label{prop:markov}
The process $X_C$ is Markov if and only if
\begin{equation}\label{eq:markov}
 \det C=C_{01}.
\end{equation}
For $C\succ0$, this is exactly the separated Robin condition $c=0$ in
\eqref{eq:penalty}.
The stationary members are 
\begin{equation}\label{eq:stationary}
 C_v=\begin{pmatrix}v+\tfrac14&v-\tfrac14\\[4pt]
                    v-\tfrac14&v+\tfrac14\end{pmatrix},
 \quad v\ge0 \qquad\textrm{with}
 \qquad K_{C_v}(s,t)=v+\tfrac14-\tfrac12|s-t|.
\end{equation}
\end{proposition}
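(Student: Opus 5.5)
The plan is to use the standard criterion for Gaussian Markov processes: a centered Gaussian process with covariance $K$ is Markov if and only if
\[
 K(s,u)K(t,t)=K(s,t)K(t,u)\qquad\text{for all } s<t<u,
\]
with the usual care when $K(t,t)=0$. In our case $K_C(t,t)>0$ for $0<t<1$, because the bridge term alone is positive there. For fixed interior $t$, I will write $K_C(s,t)=\alpha(t)+\beta(t)s$ for $s\le t$, and take the analogous affine expression for $u\ge t$. Expanding \eqref{eq:kernel} with $s<t<u$ gives
\[
 K_C(s,u)=s(1-u)+z(s)^\T Cz(u).
\]
So the left side of the criterion is a bilinear polynomial in $(z(s),z(u))$ with $t$-dependent coefficients. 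The same holds for the right side, since each factor is affine in $s$ or $u$.

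\emph{Markov condition.} Rather than expand everything blindly, I would use the factorization structure. The criterion forces $K_C(s,u)=g(s)h(u)$ on $s<u$. Here $g$ and $h$ are affine because $K_C$ is affine in each variable off the diagonal. So the criterion holds if and only if the bilinear form
\[
 (s,u)\mapsto s(1-u)+z(s)^\T Cz(u)
\]
has rank one as a polynomial in $(s,1-s)\otimes(u,1-u)$. In the basis $z(s)=(1-s,s)^\T$ we have $s(1-u)=z(s)^\T E_{10}z(u)$, where $E_{10}$ has a one in position $(1,0)$. The coefficient matrix is therefore
\[
 M=C+\begin{pmatrix}0&0\\1&0\end{pmatrix},
 \qquad
 \det M=\det C-C_{01}.
\]
Rank one (rank $\le 1$) is then exactly \eqref{eq:markov}. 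I also need to check the converse: if $K_C(s,u)=g(s)h(u)$ for $s\le u$, then the triangular criterion holds. This follows because $K(t,t)=g(t)h(t)$, and both sides reduce to $g(s)g(t)h(t)h(u)$.

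\emph{Robin case.} For $C\succ0$ I will use $C=\Delta^{-1}\begin{psmallmatrix}1+b&1-c\\1-c&1+a\end{psmallmatrix}$ from the finite boundary-penalty chart. Then $\det C=1/\Delta$ and $C_{01}=(1-c)/\Delta$. The condition $\det C=C_{01}$ therefore reads $1=1-c$, that is $c=0$.

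\emph{Stationarity.} The kernel must be a function of $s-t$. On $s\le t$, $K_C(s,t)$ is bilinear, namely $C_{00}+(C_{01}-C_{00}+1)s+(C_{11}-C_{01}\cdot0\ldots)$. I would simply expand:
\[
 K_C=z(s)^\T(C+E_{10})z(t)\quad\text{for } s\le t.
\]
A bilinear function of $(s,t)$ depends only on $t-s$ exactly when its $st$ coefficient vanishes and its $s$ and $t$ coefficients are negatives of each other. Writing $M=C+E_{10}$, the $st$ coefficient is $M_{00}-M_{01}-M_{10}+M_{11}$. The $s$ and $t$ coefficients are $M_{10}-M_{00}$ and $M_{01}-M_{00}$. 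The condition then gives $M_{10}+M_{01}=2M_{00}$, and with the first condition $M_{11}=M_{00}$. So $C_{00}=C_{11}=:w$ and $2C_{01}+1=2w$, which yields $C_{01}=w-\tfrac12$. Setting $w=v+\tfrac14$ gives \eqref{eq:stationary}. Positive semidefiniteness requires $w\ge|w-\tfrac12|$, i.e.\ $v\ge0$. The kernel formula is then immediate, with constant term $w$ and slope $-\tfrac12$ in $|s-t|$.

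The main obstacle is the Markov criterion at degenerate points, where $K_C(0,0)$ or $K_C(1,1)$ vanishes. There I would note that the endpoints are then deterministic and the criterion is only needed for interior $t$, which is harmless.
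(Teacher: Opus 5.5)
Your proposal is correct and is, in substance, the paper's proof. It uses the same three-point covariance criterion. There the middle time is always interior, so $K_C(t,t)\ge t(1-t)>0$ and the endpoint degeneracy you flag never arises. Your reading of $\det C=C_{01}$ as $c=0$, via $\det C=1/\Delta$ and $C_{01}=(1-c)/\Delta$, is also the paper's.

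Your matrix $M=C+E_{10}$ repackages the paper's direct expansion. With $K_C(s,u)=z(s)^\T Mz(u)$ for $s\le u$, a $2\times2$ Pl\"ucker identity together with $\det[z(s),z(t)]=t-s$ gives
\[
 K_C(s,u)K_C(t,t)-K_C(s,t)K_C(t,u)=-\det M\,(t-s)(u-t)=(C_{01}-\det C)(t-s)(u-t).
\]
This is exactly the paper's formula, and it settles both directions at once.

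The one step you assert without justification is that the criterion forces $K_C(s,u)=g(s)h(u)$ on $s<u$. In general the three-point identity does not force a global product form when off-diagonal covariances can vanish. A process made of independent constant blocks on $[0,\tfrac12)$ and $[\tfrac12,1]$ is Markov with positive variance but has no such factorization. $K_C$ can vanish off the diagonal, for instance at $|s-t|=\tfrac12$ for anti-periodic Brownian motion. So you should use the polynomial structure instead. Fix $t_0\in(0,1)$. On the open set $\{s<t_0<u\}$ the criterion makes $z(s)^\T Mz(u)$ equal to the rank-one form $z(s)^\T Mz(t_0)\,z(t_0)^\T Mz(u)/K_C(t_0,t_0)$. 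Two bilinear polynomials that agree on an open set have the same coefficient matrix, so $\det M=0$.

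For stationarity you impose full shift invariance of the off-diagonal bilinear form: vanishing $st$ coefficient and opposite $s$ and $t$ coefficients. That gives necessity and sufficiency in one pass. The paper uses only the weaker necessary condition that $K_C(s,s)$ is constant. It reads off $C_{11}=C_{00}$ and $C_{01}=C_{00}-\tfrac12$ from the linear and quadratic coefficients, then checks sufficiency by substitution. This shows slightly more: within the family, constant variance already implies stationarity.

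Your coefficient computation, the constraint $w\ge\tfrac14$, and the kernel $v+\tfrac14-\tfrac12|s-t|$ are all correct. Just remove the aborted partial expansion at the start of that paragraph.
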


\begin{proof}
The variance is positive at every interior time because
$K_C(t,t)\ge t(1-t)$. For a centered Gaussian process with this property,
the Markov condition is
$K_C(s,u)K_C(t,t)=K_C(s,t)K_C(t,u)$ for $s<t<u$.
Direct expansion of \eqref{eq:kernel} gives the difference
\[
 (C_{01}-\det C)(t-s)(u-t),
\]
which proves \eqref{eq:markov}.
For $C\succ0$, the entry $c$ in \eqref{eq:penalty} is $1-C_{01}/\det C$.
Stationarity requires constant variance. Equating the linear and
quadratic coefficients in $K_C(s,s)$ gives
$C_{11}=C_{00}$ and $C_{01}=C_{00}-1/2$.
Positive semidefiniteness then gives $C_{00}\ge1/4$.
Substitution proves \eqref{eq:stationary} and its sufficiency.
\end{proof}

\emph{Anti-periodic Brownian motion} corresponds to $v=0$ in \eqref{eq:stationary}, and \emph{Slepian Brownian motion}
$(W(s+1)-W(s))/\sqrt2$, a scaled version of the Slepian process
\cite{slepian1961}, corresponds to $v=1/4$. Note that endpoint independence, $C_{01}=0$, differs from separated boundary
conditions, $c=0$ in \eqref{eq:penalty}.
The total variance and increment variance are also immediate:
\begin{align*}
 \tr K_C&=\frac16+\frac{C_{00}+C_{01}+C_{11}}3,\\
 \Var(X_C(t)-X_C(s))
 &=|t-s|+(t-s)^2\bigl(e^\T Ce-1\bigr),
 \qquad e=(1,-1)^\T.
\end{align*}
The explicit construction \eqref{eq:construction} gives continuous, almost surely
nowhere-differentiable paths: only an affine random function has been
added to the bridge. Figure~\ref{fig:kernels} shows five members of
Table~\ref{tab:examples} and one member with coupled endpoints.

\section{Exact representation on any mesh}\label{sec:mesh}

Likelihood computations for $n$ observations of a Gaussian process
generally cost $O(n^3)$ operations, because they involve a dense
covariance matrix. For the ordinary Brownian family, exact likelihoods and posterior means
can be computed in $O(n)$ on any mesh using sparse matrix factorizations
and linear solves, even with independent Gaussian observation noise.
Markussen \cite{markussen2013} approximated the corresponding
log-determinant and predictions by operator calculus, which gives
formulas of linear cost on equidistant grids. Here no approximation is
needed, and the same quantities are computed exactly.

Observe $x_i=X_C(t_i)$ at
$0=t_0<t_1<\cdots<t_n=1$, with spacings $h_i=t_i-t_{i-1}$, and let
$C\succ0$. The precision quadratic form of $(x_0,\ldots,x_n)$ is the discrete
energy $\sum_i(x_i-x_{i-1})^2/h_i$, common to the whole family, plus the
boundary penalty $C^{-1}-J$ acting on the two corner entries. It is
tridiagonal apart from a single endpoint-to-endpoint entry, its
determinant is explicit, and the discretization involves no
approximation. The log-likelihood is therefore
\[
 -\frac{n+1}2\log(2\pi)
 -\frac12\log\det C
 -\frac12\sum_{i=1}^n\log h_i
 -\frac12\sum_{i=1}^n\frac{(x_i-x_{i-1})^2}{h_i}
 -\frac12\begin{pmatrix}x_0\\x_n\end{pmatrix}^{\!\T}\!(C^{-1}-J)
          \begin{pmatrix}x_0\\x_n\end{pmatrix},
\]
which has computational cost $O(n)$ and depends on $C$ only through a
$2\times2$ matrix. The same structure gives linear-cost sampling.
The displayed likelihood assumes observations at both interval endpoints.
If the first and last observation times are $a<b$, the same construction
applies on $[a,b]$, with endpoint covariance
$\Sigma_{ab}=(K_C(u,v))_{u,v\in\{a,b\}}$. Conditional on these first and
last observed values, the intervening observations form a Brownian bridge
whose conditional law does not depend on $C$.

For $x=(x_0,\ldots,x_n)^\T$, let $D_nx=(x_1-x_0,\ldots,x_n-x_{n-1})^\T$
be the vector of increments, so that $(D_nx)_i/h_i$ is the slope of the
piecewise-linear interpolant on the $i$th interval, and let
$R_nx=(x_0,x_n)^\T$ select the endpoint values, the discrete analogue
of $\gamma$.

\begin{theorem}[Mesh exactness]\label{thm:mesh}
For $C\succ0$, the precision matrix
\begin{equation}\label{eq:mesh-precision}
 Q_n=D_n^\T\diag(h_1^{-1},\ldots,h_n^{-1})D_n
          +R_n^\T(C^{-1}-J)R_n
\end{equation}
satisfies
\begin{equation}\label{eq:mesh-exact}
 (Q_n^{-1})_{ij}=K_C(t_i,t_j),\qquad
 \det Q_n=\frac{\det(C^{-1})}{\prod_{i=1}^n h_i}.
\end{equation}
For general $C\succeq0$, the sampled law is supported on
$\{x:R_nx\in\ran C\}$ and has rank $n-1+\rank C$.
Its precision form on that subspace is $q_C$ evaluated on the
piecewise-linear interpolant.
\end{theorem}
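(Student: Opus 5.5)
The plan is to use the representation \eqref{eq:construction}, $X_C=B+\mathcal E Y$, and to compute the joint density of $x=(x_0,\ldots,x_n)$ directly; reading off its quadratic form identifies $Q_n$ without inverting anything.

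\emph{Step 1: factor the sampled law.} Since $\mathcal E$ is affine, $x_0=Y_0$ and $x_n=Y_1$, so $R_nx=Y\sim N(0,C)$. Given $Y$, the interior values $(x_1,\ldots,x_{n-1})$ equal the bridge values $B(t_i)$ plus the affine interpolant of $Y$. The bridge $B$ is Brownian motion conditioned on $W(1)=0$. Hence the conditional law of $x$ given $R_nx=y$ is that of Brownian motion started at $y_0$ and conditioned to end at $y_1$. Its density, divided by the Gaussian endpoint transition density, is
\[
 \frac{\prod_{i=1}^n(2\pi h_i)^{-1/2}\exp\{-(x_i-x_{i-1})^2/(2h_i)\}}{(2\pi)^{-1/2}\exp\{-(y_1-y_0)^2/2\}},
\]
using $\sum_i h_i=1$. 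The exponent in the denominator is $-\tfrac12 y^\T Jy$.

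\emph{Step 2: multiply by the density of $Y$.} For $C\succ0$, multiply the conditional density by the $N(0,C)$ density of $y=R_nx$. The total exponent is
\[
 -\tfrac12\Bigl[\sum_i(x_i-x_{i-1})^2/h_i+(R_nx)^\T(C^{-1}-J)R_nx\Bigr]=-\tfrac12\,x^\T Q_nx.
\]
So $x$ is Gaussian with precision $Q_n$. Since the covariance of $x$ is $(K_C(t_i,t_j))$ by Theorem~\ref{thm:classification}, this gives $(Q_n^{-1})_{ij}=K_C(t_i,t_j)$. It also shows that $Q_n$ is positive definite.

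\emph{Step 3: the determinant.} Match the normalizing constants. The product of the prefactors is
\[
 (2\pi)^{-(n+1)/2}\,(\det C)^{-1/2}\prod_i h_i^{-1/2},
\]
where the $(2\pi)^{1/2}$ from the denominator cancels against one factor of $2\pi$ from $N(0,C)$. This constant must equal $(2\pi)^{-(n+1)/2}(\det Q_n)^{1/2}$, which gives $\det Q_n=\det(C^{-1})/\prod_i h_i$. This also confirms the displayed log-likelihood. The only point needing care is this bookkeeping of constants. As a cross-check I would verify the case $n=1$, where $Q_1=J+C^{-1}-J=C^{-1}$.

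\emph{Step 4: general $C\succeq0$.} Here $Y$ is supported on $V=\ran C$, with a nondegenerate density with respect to Lebesgue measure on $V$ whose exponent is $-\tfrac12 y^\T C^\dagger y$. Given $Y$, the $n-1$ interior values have a nondegenerate conditional density. So the law of $x$ is supported on the subspace $\{x:R_nx\in V\}$, which has dimension $n-1+\rank C$, and it is nondegenerate there. Step 2 gives its precision form as
\[
 \sum_i(x_i-x_{i-1})^2/h_i+(R_nx)^\T(C^\dagger-J)R_nx.
\]
Let $f$ be the piecewise-linear interpolant of $x$. Then $\gamma f=R_nx$ and $\int_0^1|f'|^2=\sum_i(x_i-x_{i-1})^2/h_i$, since $f'$ is constant on each subinterval. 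By the second expression in \eqref{eq:energy}, the precision form is therefore $q_C[f]$.

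I expect no real obstacle. The subtle step is Step 1, justifying that the bridge plus the affine interpolant of its endpoints gives the Brownian-motion transition product divided by the endpoint transition. This is the standard Markov property of Brownian motion, applied after the shift by $\mathcal E y$.
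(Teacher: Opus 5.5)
Your proof is correct, but it takes a different route from the paper's.

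The paper argues variationally. The discrete energy of the nodal interpolant is exactly $x^\T Q_n x$. Each Green's function column $K_C(\cdot,t_j)$ is piecewise linear with its only kink at the mesh point $t_j$, so it lies in the trial space. Restricting the reproducing identity $q_C(K_C(\cdot,t),f)=f(t)$ of Theorem~\ref{thm:classification} to that space then shows the Galerkin stiffness matrix inverts the sampled kernel with no approximation. The determinant comes from linear algebra: the tridiagonal recurrence gives the interior Dirichlet block determinant $(\prod_i h_i)^{-1}$, using $\sum_i h_i=1$, and the endpoint Schur complement is $C^{-1}$.

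You instead work probabilistically from \eqref{eq:construction}. Conditioning on $Y$ turns the sample into a Brownian bridge from $y_0$ to $y_1$, whose finite-dimensional density is a ratio of heat kernels. Multiplying by the $N(0,C)$ density produces $x^\T Q_n x$ in the exponent and the determinant from the normalizing constants, in one computation.

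What each route buys:
\begin{itemize}
\item Your argument is more self-contained: it needs only the construction and the Markov property of Brownian motion.
\item It delivers positive definiteness of $Q_n$, the determinant and the displayed log-likelihood together.
\item It handles the singular case via the density on $\ran C$, more explicitly than the paper's one-line treatment.
\item The paper's argument explains \emph{why} the discretization is exact: it is a Galerkin method whose trial space contains the Green's function.
\item It works directly with the energy form $q_C$, without densities, which is the viewpoint behind the spline-smoothing interpretation in Section~\ref{sec:conclusion}.
\end{itemize}

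One bookkeeping remark: the $(2\pi)^{1/2}$ from your denominator cancels one factor of $(2\pi)^{-1/2}$, not of $2\pi$, in the constant $(2\pi)^{-1}$ of $N(0,C)$. Your final constant $(2\pi)^{-(n+1)/2}(\det C)^{-1/2}\prod_i h_i^{-1/2}$ is nevertheless right.
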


\begin{proof}
The derivative energy of the nodal interpolant is
$\sum_i(x_i-x_{i-1})^2/h_i$. Its endpoint term is exactly the second
term of \eqref{eq:mesh-precision}, without mesh rescaling.
Every column $K_C(\cdot,t_j)$ of the Green's function lies in the piecewise-linear trial
space and represents nodal evaluation in the energy inner product.
Restricting the reproducing identity from
Theorem~\ref{thm:classification} therefore gives \eqref{eq:mesh-exact}'s
inverse identity, with no approximation.

For the determinant, the interior block of $Q_n$ is the Dirichlet
precision, with determinant $(\prod_i h_i)^{-1}$ by the tridiagonal
determinant recurrence, and the endpoint Schur complement of $Q_n$ is
$C^{-1}$; for $n=1$, $Q_1=C^{-1}$ directly. The representation \eqref{eq:construction} at the nodes gives
rank $n-1+\rank C$ and the restricted precision statement for
singular $C$.
\end{proof}

For $n\ge2$, a nonzero $c$ in \eqref{eq:penalty} makes the dependence graph of the nodes a cycle rather than a chain,
so the sampled process is not a Markov chain in the time ordering, consistent with 
Proposition~\ref{prop:markov}.

\section{Nonnegative operators with a null space}\label{sec:singular}

Hard endpoint constraints in $C$ leave $\mathcal L_C$ strictly positive.
Nonnegative realizations with a null space have no ordinary inverse;
they instead admit the orthogonally normalized pseudoinverses described
below.

\begin{proposition}[All nonnegative realizations]\label{prop:nonnegative}
Every real nonnegative self-adjoint extension is uniquely described by
a subspace $V\subseteq\R^2$ and a positive-semidefinite operator
$A:V\to V$. Its closed form is
\begin{equation}\label{eq:semiform}
 q_{V,A}[h+\mathcal E y]=\int_0^1|h'|^2\dd s+y^\T Ay,
 \qquad h\in H^1_0(0,1),\quad y\in V.
\end{equation}
Its operator acts as $-f''$ on
\[
 \dom(\mathcal L_{V,A})=
 \{f\in H^2:\gamma f\in V,\ 
                P_V(d_f+J\gamma f)=A\gamma f\}.
\]
Its null space is
\begin{equation}\label{eq:null}
 \mathcal N=\mathcal E(\ker A).
\end{equation}
If $A$ is positive definite on $V$ (vacuously for $V=\{0\}$), it is the realization of
Theorem~\ref{thm:classification} with $C=A^{-1}$ on $V$ and zero on
$V^\perp$. If $A$ is singular, the nullity is one or two.
\end{proposition}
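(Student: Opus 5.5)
The plan is to work entirely at the level of closed forms. A nonnegative self-adjoint extension $\mathcal L$ of the minimal operator corresponds bijectively to a closed, densely defined, nonnegative form $q$ whose operator is $\mathcal L$. First I would pin down its form domain. Since $\mathcal L\supseteq -f''$ on $H^2_0$, the form domain $\mathcal D$ contains $H^1_0$, and on $H^1_0$ the form agrees with $\int|h'|^2$. For this I would use the minimal extension: the Dirichlet (Friedrichs) realization has form domain $H^1_0$. Every nonnegative extension has form domain containing that of the Friedrichs extension, with the form restricting to it there; this is Krein--Birman theory. Because the maximal operator's domain lies in $H^2$, the form domain is contained in $H^1$, so $\mathcal D=H^1_0\oplus\mathcal E V$ for the subspace $V=\gamma\mathcal D\subseteq\R^2$.

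Next I would show that the cross terms vanish and identify $A$. For $h\in H^1_0$ and $y\in V$, the mixed term $q(h,\mathcal E y)$ must be computed. Take $f=\mathcal L^{-1}$-type elements, or simply test against $h\in H^2_0$. There $q(h,\mathcal E y)=\langle -h'',\mathcal E y\rangle=\int h'(\mathcal E y)'$ by integration by parts, since $h,h'$ vanish at the ends. That equals $(\mathcal E y)'\int h'=0$. By density of $H^2_0$ in $H^1_0$ in the $H^1$ norm and closedness, the identity extends to all of $H^1_0$. This gives $q_{V,A}$ with $y^\T Ay:=q(\mathcal E y,\mathcal E y)$, which is a positive semidefinite quadratic form on $V$. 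Uniqueness of $(V,A)$ follows because $q$ determines $\mathcal D$, hence $V=\gamma\mathcal D$, and determines $A$ by restriction. Conversely, each pair $(V,A)$ gives a closed nonnegative form, closed because it is an $H^1$-equivalent norm plus a bounded term. Its operator extends the minimal one, which settles existence.

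The domain description then comes from the first representation theorem. We have $f\in\dom$ with $\mathcal Lf=g$ iff $q(f,\phi)=\langle g,\phi\rangle$ for all $\phi\in\mathcal D$. Testing with $\phi\in H^1_0$ gives $f\in H^2$ and $g=-f''$. Writing $f=h+\mathcal E y$ and testing with $\phi=\mathcal E w$, integration by parts gives
\[
\int f'\phi'=-\int f''\phi-d_f^\T\gamma\phi .
\]
Combining this with $\int(\mathcal E y)'(\mathcal E w)'=y^\T Jw$ and the vanishing cross terms, the boundary identity becomes $w^\T(Ay-d_f-Jy)=0$ for all $w\in V$. This is exactly $P_V(d_f+J\gamma f)=A\gamma f$. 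The sign bookkeeping is the same computation as in Theorem~\ref{thm:classification}.

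For the null space, $\mathcal Lf=0$ forces $f$ affine, so $f=\mathcal E y$ with $y\in V$. Then $q[f]=0$ means $y^\T Ay=0$, i.e.\ $y\in\ker A$. Conversely, for $y\in\ker A$ the form vanishes on $\mathcal E y$, and for a nonnegative operator that implies $\mathcal L f=0$. This proves \eqref{eq:null}. If $A\succ0$ on $V$, set $C=A^{-1}\oplus0$. Then $\ran C=V$ and $C^\dagger=A$ on $V$, so \eqref{eq:semiform} coincides with the first form in \eqref{eq:energy}, since $f-\mathcal E\gamma f=h$. If $A$ is singular, $\ker A\neq\{0\}$ has dimension one or two, and $\mathcal E$ is injective, so the nullity is one or two.

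The main obstacle is the first step, that the form domain contains $H^1_0$ with the Dirichlet form on it. I would cite the standard fact that the Friedrichs extension is the maximal nonnegative extension in form sense: any nonnegative extension's form extends the closure of the minimal form, whose domain is $H^1_0$.
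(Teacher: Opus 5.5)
Your proof is correct in outline, and it takes a genuinely different route from the paper.

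\textbf{Comparison of routes.} The paper starts from the boundary-value side. A self-adjoint realization is a maximal isotropic plane for $\omega$, parametrized by the value projection $V$ and a symmetric $M$ with $P_Vd=My$. The paper then writes the form $\int_0^1|f'|^2\dd s+(\gamma f)^\T M\gamma f$ on $\{f\in H^1:\gamma f\in V\}$. It uses the trace inequality to get closedness and semiboundedness even for indefinite $M$, and only at the end reads off nonnegativity as $A=M+P_VJ|_V\succeq0$.

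You instead start from the correspondence between nonnegative self-adjoint operators and closed forms. You obtain $V=\gamma\mathcal D$ and $A$ intrinsically, as the restriction of $q$ to $\mathcal E V$. Your key lemma is the vanishing cross term $q(h,\mathcal E y)=0$, and the boundary conditions then come out of the first representation theorem. This makes uniqueness transparent and avoids the symplectic classification altogether. The paper's route, in exchange, also shows in passing that every self-adjoint realization, nonnegative or not, is a bounded-below form of the same type.

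\textbf{A step that needs more justification.} It is not the one you flag. The inclusion $H^1_0\subseteq\mathcal D$, with $q=\int|h'|^2$ there, is elementary. The form $q$ is closed and agrees with $\langle -f'',f\rangle=\|f'\|_2^2$ on $H^2_0\subseteq\dom\mathcal L$, so it extends the closure of that form.

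The reverse inclusion $\mathcal D\subseteq H^1$ does not follow merely from $\dom\mathcal L\subseteq H^2$. The space $\mathcal D$ is the completion of $\dom\mathcal L$ in the form norm. On $\dom\mathcal L$ that norm includes the boundary term $d_f^\T\gamma f$, which you have not controlled. The paper controls it through $P_Vd=My$ and the trace inequality.

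Within your framework, the fix is finite-dimensionality:
\begin{enumerate}
\item The deficiency indices are $(2,2)$, so $\dom\mathcal L=H^2_0+N$ (direct sum) with $\dim N=2$ and $N\subseteq H^2$.
\item $\dom\mathcal L$ is a core for $q$, and the form-norm closure of $H^2_0$ is $H^1_0$.
\item A closed subspace plus a finite-dimensional one is closed, so $\mathcal D=H^1_0+N\subseteq H^1$.
\end{enumerate}
With that inserted, your remaining steps go through as written: the domain identity $w^\T(Ay-d_f-Jy)=0$ for $w\in V$, the null space, and the comparison with $q_C$.
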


\begin{proof}
A self-adjoint endpoint domain is a maximal isotropic plane for
\[
 \omega\bigl((y,d),(\widetilde y,\widetilde d)\bigr)
     =y^\T\widetilde d-d^\T\widetilde y.
\]
Let $V$ be its projection onto the value coordinate.
The vertical pairs are exactly $\{0\}\times V^\perp$: isotropy gives
the inclusion, and dimensions give equality. Consequently
$P_Vd=My$ for a uniquely determined symmetric operator $M$ on $V$.
Conversely these conditions define a maximal isotropic plane.

The corresponding form on $\{f\in H^1:\gamma f\in V\}$ is
$\int|f'|^2+(\gamma f)^\T M\gamma f$. The trace inequality
\[
 |\gamma f|^2\le\varepsilon\|f'\|_2^2+
                    c_\varepsilon\|f\|_2^2,\qquad\varepsilon>0,
\]
shows that adding a sufficiently large multiple of $\|f\|_2^2$
gives a norm equivalent to the $H^1$ norm on this closed trace
subspace. Thus the form is closed and bounded below; integration
by parts identifies its operator domain. Decomposing
$f=h+\mathcal E y$ gives \eqref{eq:semiform} with
$A=M+P_VJ|_V$. It is nonnegative exactly when $A\succeq0$, and its
zero-energy functions are exactly \eqref{eq:null}.
The strictly positive case follows by comparison with
\eqref{eq:energy}.
\end{proof}

\begin{proposition}[Proper covariance of the pseudoinverse]\label{prop:pseudo}
Extend $A^\dagger$ by zero on $V^\perp$, and let $\Pi$ be the $L^2$
orthogonal projection onto $\mathcal N^\perp$. The Moore--Penrose inverse of
the realization in Proposition~\ref{prop:nonnegative} has kernel
\[
 K^\dagger=\Pi_s\Pi_t\!
       \left[\bker(s,t)+z(s)^\T A^\dagger z(t)\right].
\]
The subscripts apply $\Pi$ in the respective kernel variables.
This is a continuous positive-semidefinite covariance kernel. We also call
its centered Gaussian process $X^\dagger$ a \emph{Brownian process}. It has
continuous sample paths and
satisfies $\int_0^1X^\dagger(s)\nu(s)\dd s=0$ almost surely
for every $\nu\in\mathcal N$. For $0<t<1$, in distributions on $(0,1)$,
\begin{equation}\label{eq:corrected-source}
 -\partial_s^2K^\dagger(s,t)=\delta(s-t)
                   -\sum_{j=1}^{\dim\mathcal N}\nu_j(s)\nu_j(t),
\end{equation}
where $(\nu_j)$ is any $L^2$-orthonormal basis of $\mathcal N$.
\end{proposition}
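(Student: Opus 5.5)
The plan is to build the Moore--Penrose inverse explicitly on $\mathcal N^\perp$ and identify its kernel. First I would record that $\mathcal L=\mathcal L_{V,A}$ has compact resolvent: its form domain embeds compactly in $L^2$ by Proposition~\ref{prop:nonnegative}. Hence $\mathcal L$ is self-adjoint with discrete spectrum and finite-dimensional null space $\mathcal N=\mathcal E(\ker A)$. Its Moore--Penrose inverse $\mathcal L^\dagger$ is then the bounded self-adjoint operator that vanishes on $\mathcal N$ and inverts $\mathcal L$ on $\mathcal N^\perp$. It is nonnegative and is characterized variationally. For $g\in\mathcal N^\perp$, $u=\mathcal L^\dagger g$ is the unique $u$ in the form domain with $u\perp\mathcal N$ and
\[
 q_{V,A}(u,f)=\langle g,f\rangle\quad\text{for all } f \text{ in the form domain.}
\]
This identity for general $f$ follows from the one for $f\perp\mathcal N$, because both sides vanish on $\mathcal N$.

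Next I would show that the operator $G$ with kernel $\bker+z^\T A^\dagger z$ is a generalized inverse on $\mathcal N^\perp$. Take $g\in\mathcal N^\perp$ and set $w=Gg=h_g+\mathcal E A^\dagger\mathcal E^*g$, where $h_g=\mathcal L_{\mathrm D}^{-1}g$. Then $w$ lies in the form domain, since $A^\dagger$ maps into $V$. Using the decomposition \eqref{eq:semiform} and the polarization computation of Theorem~\ref{thm:classification}, for $f=h+\mathcal E y$ we get
\[
 q_{V,A}(w,f)=\langle g,h\rangle+y^\T AA^\dagger\mathcal E^*g .
\]
Since $AA^\dagger=P_{\ran A}$ on $V$, it remains to check that $y^\T P_{\ran A}\mathcal E^*g=y^\T\mathcal E^*g$. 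The difference involves $y^\T P_{\ker A}\mathcal E^*g=\langle \mathcal E P_{\ker A}y,g\rangle$, which is $0$ because $g\perp\mathcal N$. Hence $q_{V,A}(w,f)=\langle g,f\rangle$. Projecting, $\Pi w$ satisfies the same identity, because $\mathcal N$ is null for the form, and $\Pi w\perp\mathcal N$. So $\mathcal L^\dagger g=\Pi Gg$. Since $\mathcal L^\dagger$ kills $\mathcal N$, we obtain $\mathcal L^\dagger=\Pi G\Pi$, whose kernel is the displayed $K^\dagger$.

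For the kernel properties, $\Pi=I-\sum_j\nu_j\otimes\nu_j$ with affine $\nu_j$. Expanding $\Pi_s\Pi_t$ writes $K^\dagger$ as $K$ minus finitely many terms of the form $\nu_j(s)(G\nu_j)(t)$, plus constant multiples of $\nu_j(s)\nu_k(t)$. These are continuous because $G\nu_j$ is continuous, so $K^\dagger$ is continuous. Positive semidefiniteness follows from $\mathcal L^\dagger\succeq0$; alternatively, $G$ is positive on $\mathcal N^\perp$ by the form identity. Continuity of paths comes from the explicit construction
\[
 X^\dagger=\Pi X_{A^\dagger}=X_{A^\dagger}-\sum_j\langle X_{A^\dagger},\nu_j\rangle\nu_j ,
\]
where $X_{A^\dagger}$ is the Brownian process of Theorem~\ref{thm:classification}. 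This process has covariance $K^\dagger$ and continuous paths, and it is orthogonal to $\mathcal N$ pathwise. That gives the almost sure vanishing of $\int X^\dagger\nu$.

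For \eqref{eq:corrected-source}, apply $-\partial_s^2$ to the expansion. The term $-\partial_s^2K(s,t)$ equals $\delta(s-t)$, since the affine part contributes nothing. The $\nu_j(s)$ factors are affine, so those terms vanish. The remaining term is $-\sum_j(G\nu_j)(s)\nu_j(t)$, and $-\partial_s^2G\nu_j=\nu_j$ because $G$ inverts $-\partial^2$ in the interior. This yields $\delta(s-t)-\sum_j\nu_j(s)\nu_j(t)$, independent of the basis chosen. The main obstacle is the generalized-inverse step: the correct handling of $AA^\dagger$ and of $\ker A$ on $V$, and verifying that orthogonality to $\mathcal N$ exactly cancels the missing $\ker A$ component.
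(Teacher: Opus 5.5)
Your proposal is correct and follows essentially the paper's argument: take $C=A^\dagger$, show that $K_Cg$ satisfies the form identity for $g\perp\mathcal N$, subtract its null-space projection to get $\mathcal L^\dagger=\Pi K_C\Pi$, and realize $X^\dagger=\Pi X_C$. Your $AA^\dagger=P_{\ran A}$ computation is the paper's remark that test functions with endpoint values outside $\ran A$ differ from admissible ones by an element of $\mathcal N$, and the only cosmetic difference is that you derive the distributional source identity by differentiating the expanded kernel term by term instead of quoting $\mathcal L\mathcal L^\dagger=\Pi$.
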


\begin{proof}
Put $C=A^\dagger$ and $S=\ran A$. On the subspace with endpoint values
in $S$, the form $q_{V,A}$ is exactly $q_C$.
For $g\perp\mathcal N$, the function $u_0=K_Cg$ solves
$q_{V,A}(u_0,f)=\langle g,f\rangle$ for endpoint values in $S$.
Every remaining test function differs from such a function by an
element of $\mathcal N$, on which both sides vanish. Subtracting the null-space
projection of $u_0$ therefore gives the unique solution in $\mathcal N^\perp$.
Projecting the input as well proves $\mathcal L^\dagger=\Pi K_C\Pi$.

The same expression is the covariance of $\Pi X_C$, so $X^\dagger$ can be
taken to be $\Pi X_C$. Since $\Pi$ subtracts from $X_C$ at most two
affine functions with Gaussian coefficients, $X^\dagger$ has continuous
paths, $K^\dagger$ is continuous, and $\int_0^1X^\dagger\nu=0$ for every
$\nu\in\mathcal N$. Finally
$\mathcal L\mathcal L^\dagger=\Pi$ gives \eqref{eq:corrected-source}.
\end{proof}

Here $\Pi f=f-\sum_j\nu_j\int\nu_jf$ is explicit and involves at most two
affine modes. In the two classical cases below, the null space consists
of the constant functions, so $\nu_1=1$ and $\Pi$ simply subtracts the
time average $\int_0^1f$.

\begin{example}[Neumann--Neumann Brownian motion]\label{ex:nn}
The Neumann conditions $f'(0)=f'(1)=0$ leave both endpoints free; they
are the no-flux conditions for diffusion along an insulated rod. In
Proposition~\ref{prop:nonnegative} they correspond to $V=\R^2$
and $A=J$. Adding a constant to $f$ changes neither the energy
$\int_0^1|f'|^2$ nor the boundary conditions, so the constants form the
null space and no ordinary inverse exists. The corresponding formal
Gaussian model is improper because its constant mode is unpenalized.
Proposition~\ref{prop:pseudo} removes the constant mode, with $A^\dagger=J/4$, and
gives
\[
 K_{\mathrm{NN}}(s,t)=\frac{s^2+t^2}{2}-s\vee t+\frac13,
 \qquad
 X_{\mathrm{NN}}(s)=W(s)-\int_0^1W(u)\dd u.
\]
Neumann--Neumann Brownian motion is thus Brownian motion minus its time
average, the mean-centered Brownian motion of \cite{raketbestiary}.
Centering time-reversed Brownian motion gives the same law, as it must:
the conditions are unchanged under $s\mapsto1-s$. The variance is $1/3$
at both endpoints and $1/12$ at the midpoint, every path has zero time
average, and the kernel is \emph{balanced},
$\int_0^1K_{\mathrm{NN}}(s,t)\dd s=0$, in the terminology of
\cite{raket2021}.
\end{example}

\begin{example}[Periodic Brownian motion]\label{ex:per}
The periodic conditions $f(0)=f(1)$, $f'(0)=f'(1)$ join the two
endpoints smoothly, so the interval becomes a circle of circumference
one. In Proposition~\ref{prop:nonnegative} they correspond to
$V=\operatorname{span}\{(1,1)^\T\}$ and $A=0$. As in
Example~\ref{ex:nn}, no point of the circle fixes the level, and the
constants again form the null space. Here $A^\dagger=0$, and
Proposition~\ref{prop:pseudo} gives
\[
 K_{\mathrm{per}}(s,t)=\frac{|s-t|^2}{2}-\frac{|s-t|}{2}+\frac1{12},
 \qquad
 X_{\mathrm{per}}(s)=B(s)-\int_0^1B(u)\dd u,
\]
the Brownian bridge minus its time average. Its paths satisfy
$X_{\mathrm{per}}(0)=X_{\mathrm{per}}(1)$ and close into loops. The
process is stationary on the circle: its variance is $1/12$ everywhere,
and its covariance depends only on the distance between $s$ and $t$
measured around the circle, since it is unchanged under
$|s-t|\mapsto1-|s-t|$. 
\end{example}

The same construction applies when the null space contains all affine
functions. This happens for $V=\R^2$ and $A=0$, with boundary conditions
$f'(0)=f'(1)=f(1)-f(0)$, under which the energy measures only the
deviation of $f'$ from the chord slope $f(1)-f(0)$.
Proposition~\ref{prop:pseudo} then projects the bridge covariance away
from both constants and linear functions. The resulting process is
Brownian motion with its least-squares line removed.

\section{Consequences and scope}\label{sec:conclusion}

The classification concerns the real operator $-f''$ on $[0,1]$ with
self-adjoint endpoint conditions. Strictly positive realizations give
the ordinary covariances \eqref{eq:kernel}; noninvertible nonnegative
realizations require the normalized pseudoinverses of
Section~\ref{sec:singular}.

For the ordinary family, estimating the boundary conditions amounts
to estimating $C$. With noiseless observations at both endpoints,
each curve contributes a single draw from $N(0,C)$: conditional on
these endpoint values, the remaining observations carry no further
information about $C$. For $M$ independent centered curves sharing
the same endpoint covariance, write
$Y_m=(x_{m,0},x_{m,n})^\T$. The maximum likelihood estimate over
$C\succ0$ is
\[
 \widehat C=\frac{1}{M}\sum_{m=1}^M Y_mY_m^\T,
\]
provided $\widehat C$ is positive definite. Thus, once both endpoints
are observed, replication rather than finer sampling supplies
additional information about $C$. If one or both original endpoints
are unobserved, the first and last observed values remain sufficient
for $C$, provided there are at least two distinct noiseless
observation times; their covariance generally differs from $C$.
With measurement noise, observations between them can also contribute.

The same endpoint covariance controls the boundary behavior of a
smoother. Minimizing
\[
 \sum_i\{y_i-f(t_i)\}^2+\lambda q_C[f]
 \quad\text{over }\mathcal H_C,\qquad \lambda>0,
\]
gives a continuous piecewise-affine fit with knots among the
observation times. Under independent $N(0,\sigma^2)$ observation
errors, independent of the prior process, this is the posterior mean
for a centered Gaussian prior with covariance
$(\sigma^2/\lambda)K_C$
\cite{kimeldorf1970,rasmussen2006}.
The choice $C=0$ pins both ends to zero; more generally, singular
$C$ restricts the endpoint vector to $\ran C$.
Within a fixed endpoint support, increasing $C$ in covariance order
weakens the endpoint penalty.

Across the ordinary family, the Brownian bridge component remains
the same. What changes is how the endpoints are allowed to vary,
and how they are tied to one another.


\begin{thebibliography}{9}
\bibitem{kimeldorf1970}
G.~S.~Kimeldorf and G.~Wahba.
A correspondence between Bayesian estimation on stochastic processes
and smoothing by splines.
\emph{Annals of Mathematical Statistics}, 41(2):495--502, 1970.

\bibitem{markussen2013}
B.~Markussen.
Functional data analysis in an operator-based mixed-model framework.
\emph{Bernoulli}, 19(1):1--17, 2013.

\bibitem{raket2021}
L.~L.~Raket.
Differential equations, splines and Gaussian processes.
arXiv:2102.03306, 2021.
\url{https://arxiv.org/abs/2102.03306}.

\bibitem{raketbestiary}
L.~L.~Raket.
A Bestiary of Brownian Processes: A Field Guide to Boundary Conditions
and Conditioning.
arXiv:2610.33747, 2026.
\url{https://arxiv.org/abs/2610.33747}.

\bibitem{rasmussen2006}
C.~E.~Rasmussen and C.~K.~I.~Williams.
\emph{Gaussian Processes for Machine Learning}.
MIT Press, Cambridge, MA, 2006.

\bibitem{slepian1961}
D.~Slepian.
First passage time for a particular Gaussian process.
\emph{Annals of Mathematical Statistics}, 32(2):610--612, 1961.

\bibitem{zettl2005}
A.~Zettl.
\emph{Sturm--Liouville Theory}.
Mathematical Surveys and Monographs, vol.~121.
American Mathematical Society, Providence, RI, 2005.
\end{thebibliography}
\end{document}